\documentclass[10pt,twoside,a4paper,abstract=on]{scrartcl}

\usepackage{authblk}

\usepackage{geometry}
\usepackage{multicol}

\usepackage[utf8]{inputenc}
\usepackage{hyperref}
\usepackage{caption}

\usepackage{todonotes}

\usepackage[toc,title,page]{appendix}

\usepackage{amsmath,amsfonts,amssymb,amsthm}
\usepackage{bbm}

\usepackage{enumitem}

\usepackage{tikz}  

\usepackage{algorithm}
\usepackage{algpseudocode}

\newtheorem{Theorem}{Theorem}[section]

\newtheorem{Proposition}[Theorem]{Proposition}

\newtheorem{Remark}[Theorem]{Remark}

\usepackage[a-1b]{pdfx}

\usepackage{graphicx}
\usepackage{subcaption}

\usepackage[maxbibnames=6]{biblatex}
\definecolor{electricultramarine}{rgb}{0.25, 0.0, 1.0}
\definecolor{ikb}{rgb}{0.0, 0.18, 0.65}
\definecolor{green(colorwheel)(x11green)}{rgb}{0.16, 0.5, 0.0}

\newcommand*{\fzcst}[1]{\relax\ifmmode\text{\textcolor{green(colorwheel)(x11green)}{\sout{\ensuremath{#1}}}}\else\textcolor{green(colorwheel)(x11green)}{\sout{#1}}\fi}

\usepackage{todonotes,varwidth}

\title{
	Dynamical Low-Rank Smoothing}
\author[1]{Youssef Marzouk}
\author[2]{Fabio Nobile}
\author[2]{Fabio Zoccolan}
\affil[1]{Department of Aeronautics and Astronautics, Massachusetts Institute of Technology, Cambridge, MA 02139, USA. email: ymarz@mit.edu}
\affil[2]{Institut de Mathématiques, École Polytechnique Fédérale de Lausanne, 1015 Lausanne, Switzerland. email: fabio.nobile@epfl.ch, fabio.zoccolan@epfl.ch}
\date{}

\begin{document}
   
   \maketitle
	
	\begin{abstract}
		 Computational costs often make smoothing procedures prohibitive for high-dimensional data assimilation problems. To address this challenge, we propose a dynamical low-rank approximation (DLRA) methodology for smoothing concerning frameworks based on stochastic differential equations. We extend the previously developed joint mean-and-covariance optimization (JMCO) filtering setting to derive a reduced-order smoother via the Rauch--Tung--Striebel recursion and establish the corresponding Kalman--Bucy smoothing for affine drift dynamics. The resulting algorithms retain the adaptive nature of DLRA while significantly reducing the computational time and storage of the whole smoothing procedure.
    \end{abstract}
    %\tableofcontents
    
    %\listoftodos

	\section{Introduction}
	\addcontentsline{toc}{section}{Introduction}
	
	Numerical models are indispensable tools for investigating complex physical and engineering systems. Their predictive capabilities, however, are inevitably limited by model imperfections, uncertain parameters, and noisy measurements. Consequently, deterministic simulations alone are often insufficient, motivating probabilistic descriptions of the system dynamics together with the incorporation of observational information. This whole procedure is referred to as \emph{data assimilation}.
	
	Data assimilation combines mathematical models with measurements to improve state estimation and uncertainty quantification. Within the Bayesian framework, the unknown state is modeled as a random variable whose probability distribution is sequentially updated as observations become available. In continuous time, this setting is naturally described through stochastic differential equations (SDEs).
	This data assimilation setting is continuously employed in applications ranging from numerical weather prediction and oceanography \cite{cotter2020data,wang2025nonlinear} to engineering \cite{chen2016filtering,falkena2023bayesian,sztamfater2026sequential}. In large-scale systems, however, the computational cost of propagating probability distributions often becomes prohibitive, motivating the development of reduced-order methodologies.
	
	Dynamical low-rank approximation (DLRA) provides an adaptive reduced-order framework in which the approximation subspace evolves together with the solution, avoiding the limitations of fixed reduced bases. Originally introduced for matrix ordinary differential equations (ODEs) in \cite{koch2007dynamical}, DLRA has subsequently been extended to deterministic \cite{bachmayr2025dynamical,dektor2025interpolatory} and random partial differential equations \cite{musharbash2015error,kazashi2021stability,}. More recently, a mathematical rigorous theory has been extended to stochastic differential equations through the dynamically orthogonal framework \cite{bao2026exponential,kazashi2025dynamical,kazashi2026further}, based on \cite{sapsis2009dynamically}; see also \cite{cao2018stochastic} for another formulation.
	
	The combination of DLRA and data assimilation has only recently begun to be investigated. Reduced-order Kalman-type filters based on DLRA were proposed in \cite{marzouk2026filter,nobile2025dynamicallowrankapproximationskalman,nobile2026dynamicallowrankensemblekalman}, leading to efficient ensemble and particle implementations. Related dynamically orthogonal methodologies have also been explored for oceanographic and chaotic systems, as well as in conjunction with Gaussian mixture methods or other strategies \cite{lu2021bayesian,sapsis2013blending,sondergaard2013dataI,sondergaard2013dataII}. These works demonstrate that DLRA can substantially reduce the computational cost of filtering while preserving its adaptive nature, as it can tailor the surrogate dynamics with the information contained in (continuous- or discrete-time) high-dimensional data, while remaining low-dimensional and, hence, cheap to compute.
	
	The present work extends the DLRA filtering framework of \cite{marzouk2026filter} to the \emph{smoothing} setting. Unlike filtering, which conditions only on observations available up to the current time, smoothing exploits the entire observation window to reconstruct past states, leading to more accurate state estimates and uncertainty quantification \cite{sanz2023inverse,sarkka2023bayesian}. Owing to its backward propagation step, however, smoothing is computationally even more demanding than filtering, making reduced-order formulations, and hence DLRA, particularly valuable for high-dimensional problems.
	
	Building upon the joint mean-and-covariance optimization (JMCO) formulation introduced for filtering, we derive its smoothing counterpart through the classical Rauch--Tung--Striebel (RTS) recursion \cite{rauch1965maximum}. Moreover, for affine drift dynamics, we prove that the resulting reduced-order smoother satisfies the same structural properties as the Kalman--Bucy smoother, thereby extending the advantageous foundations of DLRA from filtering to smoothing.
	
	In the SDE setting, the DLRA approximation can be represented through the dynamically orthogonal decomposition, i.e.\ as the following linear combination of $k$ members:
	\begin{equation}\label{eq: DO}
		X^{\mathrm{DLRA}}_t(\omega) = \sum_{i = 1}^{k} U^{i}_tY^{i}_t(\omega), \quad t \geq 0,
	\end{equation}
	where $\{U^{i}\}_{i=1,\dots,k}$ is the deterministic basis with orthonormal columns satisfying \( U_t \dot{U}_t^\top=0\), while $\{Y^{i}\}_{i=1,\dots,k}$ is the stochastic process, both evolving with \(k\ll d\) degrees of freedom \cite{kazashi2025dynamical}. 
	Alternatively to the two-terms approximation in \eqref{eq: DO}, one can look for a three-component surrogate composed by the mean $m$, a deterministic basis $U$ (with the aforementioned features), and a centered stochastic basis $Y$, i.e.\ $X^{\mathrm{DLRA}}_t(\omega) = m^{\mathrm{DLRA}}_t + \sum_{i = 1}^{k} U^{i}_tY^{i}_t(\omega)$, with appropriate properties. Then, equations for the triplet $(m^{\mathrm{DLRA}}, U, Y)$ would be derived very similarly to the framework in \cite{kazashi2025dynamical}. In that case, mean and covariance of $X^{\mathrm{DLRA}}_t$ are simply given by $m^{\mathrm{DLRA}}_t$ and $C_{t}= U_t^{\top} \mathbb{E}[Y_t Y_t^{\top}]U_t$, respectively, where $\mathbb{E}$ denotes the expectation. In this paper we will consider this $3$-term DLRA initiation, as it allows to have an immediate connection with first and second moments of the full-order solution associated to \eqref{eq: DA SDE}.
		Besides providing a mathematically consistent low-rank approximation, this representation naturally induces, after complete discretization, interacting particle systems, making DLRA particularly suitable for ensemble-based implementations, and hence ensemble Kalman-type filters.  
	
	The DLRA background for filtering is recalled in Section~\ref{sec: setting}, whereas we revise the RTS recursion in \ref{sec: RTS}. In Section~\ref{sec: DLRA-JMCO smoother} we derive the proposed DLRA-JMCO smoother and in Section~\ref{sec: DLRA KBF smoother} we establish its connection with the Kalman--Bucy smoother for affine drift. Finally, in Section \ref{sec: num} numerical examples corroborating our theoretical results will be shown.

	\section{Setting: DLRA and JMCO-Filtering}\label{sec: setting}
	
	In this section, we briefly introduce the theoretical setting where the DLRA-JMCO filter was settled \cite{marzouk2026filter}. We will consider the same framework to derive the sought smoothing relations.
	
	Let $\left( \Omega, \mathcal{F}, \mathbb{P}, (\mathcal{F}_t)_{t \geq 0} \right)$ be a filtered complete probability space with the usual conditions \cite[Remark 6.24]{schilling2021brownian}.
	For $T>0$, we consider the following SDE formulation of a continuous-time data assimilation problem
	\begin{equation}\label{eq: DA SDE}
		\begin{aligned}
			\mathrm{d}X_t &= A(t, X_t) \, \mathrm{d}t + Q^{\frac{1}{2}} \, \mathrm{d}W_t \\
			\mathrm{d}Z_t &= H(t) X_t \, \mathrm{d}t + R^{\frac{1}{2}} \, \mathrm{d}B_t,
		\end{aligned}
	\end{equation}
	where at time $t$ the random vector $X(t)=\left(X_1(t), \ldots, X_d(t)\right)^{\top}$ belongs to $L^2(\Omega, \mathbb{R}^d)$, while $Z(t)=\left(Z_1(t), \ldots, Z_h(t)\right)^{\top}$ belongs to $L^2(\Omega, \mathbb{R}^h)$. Here, 
	we consider a (possibly nonlinear) drift $A : [0, \infty) \times \mathbb{R}^d \to \mathbb{R}^d$ and a linear observation operator $H : [0, \infty) \to \mathbb{R}^{h \times d}$. $W$ and $B$ are real $d$-dimensional and $h$-dimensional $(\mathcal{F}_t)$-Brownian motion, 
	i.e., $W(t) = \left(W_1(t), \ldots , W_d(t) \right)^{\top}$ and $B(t) = \left(B_1(t), \ldots , B_h(t) \right)^{\top}$, respectively. Furthermore, $Q \in \mathbb{R}^{d \times d}$ and $R \in \mathbb{R}^{h \times h}$ are symmetric positive semidefinite matrices, which are the noise
	covariances of the state and the observation, respectively. Moreover, $\{W(t)\}_{t\in [0,T]}$, $\{B(t)\}_{t\in [0,T]}$, and $X_0$ are assumed to be independent. \eqref{eq: DA SDE} is characterized by additive noise SDEs, however, as the filtering equations in \cite{marzouk2026filter}, the following discussion can be enlarged to the case of multiplicative-noise-type dynamics and observation, too.
	In \emph{filtering} and \emph{smoothing}, $X_t \in \mathbb{R}^d$ is the “noisy’’ \emph{state} that we are trying to infer,
	whereas $Z_t \in \mathbb{R}^h$ is the “noisy’’ \emph{observation} or \emph{data}. 
		
	In \cite{marzouk2026filter}, to derive DLRA-type filtering equations we consider a discretization perspective, in the spirit of the first derivation of DLRA for ODEs \cite{koch2007dynamical}. First, we consider a Euler–Maruyama discretization
	of \eqref{eq: DA SDE} with uniform time step, namely for all $n \in \{0, \dots, N\}$ with $\Delta t = \frac{T}{N}$, one has
	\begin{equation}\label{DA SDEs EM}
		\begin{aligned}
			X_{n+1} &= X_n + A(t_n, X_n)\Delta t + Q^{\frac{1}{2}} \Delta W_n \\
			Z_{n+1} &= Z_n + H(t_n) X_{n+1} \Delta t + R^{\frac{1}{2}} \Delta B_n,
		\end{aligned}	
	\end{equation}
	where we consider a uniform time–mesh partition
	$\Delta = \{ t_i,\; i \in \{1,\dots,N\} : t_0 = 0,\; t_N = T,\; \text{and}\;
	t_i - t_{i-1} = \Delta t,\; i \in \{1,\dots,N\} \}$, $\Delta W_n: = W(t_{n+1})-W(t_{n}) \sim \mathcal{N}(0,\Delta t I_{d \times d})$ and $\Delta B_n: = B(t_{n+1})-B(t_{n}) \sim \mathcal{N}(0,\Delta t I_{h \times h})$ are Brownian increments, satisfying $\mathbb{E}[\Delta W_n | \mathcal{F}_{t_n}]=0$,  $\mathbb{E}[\Delta B_n | \mathcal{F}_{t_n}]=0$ for all $n$, and $\mathbb{E}[\Delta W_n \Delta W_m^{\top}] = 0$, $\mathbb{E}[\Delta B_n \Delta B_m^{\top}] = 0$, for all $m \neq n$, and, due to the independence of $B$ and $W$, $\mathbb{E}[\Delta W_n \Delta B_m^{\top}] = 0$ for all $m,n$. Notice that in \eqref{DA SDEs EM} the equation of the observation process is discretized “implicitly’’ in the
	linear drift.
	
	First, we assume that our DLRA $X_n^{\mathrm{DLRA}}$ is defined as $X_n^{\mathrm{DLRA}}=m_n^{\mathrm{DLRA}} + U_n^{\top} Y_n$, where the time-dependent $m_n$, $U_n$, and $Y_n$ are the discrete mean, the discrete deterministic and the stochastic basis, respectively.
	Then, we seek updates $(\Delta m_n^{\mathrm{DLRA}}, \Delta U_n, \Delta Y_n)$ which satisfy
\begin{equation}\label{eq: DLRA updates}
	\begin{aligned} m_{n+1}^{\mathrm{DLRA}} = m_n^{\mathrm{DLRA}} + \Delta m_n^{\mathrm{DLRA}}, \quad
	U_{n+1} = U_n + \Delta U_n, \quad
	Y_{n+1} = Y_n + \Delta Y_n,
	\end{aligned}
\end{equation} 
under the following conditions:
one has $m^{\mathrm{DLRA}}_n= \mathbb{E}[X_{n}^{\mathrm{DLRA}}] \in \mathbb{R}^{d}$ for all $n$, which is a deterministic vector; $U_nU_n^{\top} = I_{k \times k}$, $U_n \Delta U_n^{\top} = 0$; and $Y_n \in L^2(\Omega,  \mathbb{R}^k)$ adapted to the filtration $\mathcal{F}_{t_n}$, $\mathbb{E}[Y_n] =0$, and with linearly independent components $Y_n^1, \dots, Y_n^k$ in $L^2(\Omega,  \mathbb{R})$ for all $n$. We refer to the orthogonality condition $U_n \Delta U_n^{\top} = 0$ as the \emph{discrete gauge condition}, as for the limit $\Delta t \to 0$ one formally retrieves the usual gauge condition \(U_t\dot U_t^\top=0\). Then the DLRA covariance is $C_n^{\mathrm{DLRA}} = U_n^{\top} \mathbb{E}[Y_n Y_n^{\top}]U_n$.
	
	In order to determine the equations that the updates $(\Delta m_n^{\mathrm{DLRA}}, \Delta U_n, \Delta Y_n)$ follow,
   we assume that the DLRA at the point $n+1$, i.e.\ $X_{n+1}^{\mathrm{DLRA}}$, minimizes the following local error between its mean $m_{n+1}^{\mathrm{DLRA}}$ and the one $m$ of the process $X_{n+1}$ solution of \eqref{DA SDEs EM}, with starting point $X_n^{\mathrm{DLRA}}$ and conditioned to the observation $Z_{n+1}$, and their respective covariances $C_{n+1}^{\mathrm{DLRA}}$ and $C$ at each $n$, i.e.\
	\begin{equation}\label{eq: JMCO}
	 \begin{aligned}
		\Big( | m_n^{\mathrm{DLRA}} + \Delta m_n^{\mathrm{DLRA}} - m_{\{X_{n+1} \mid Z_{n+1}, X_n = X_n^{\mathrm{DLRA}}\}}|^2& \\
			+ \| (U_n + \Delta U_n)^{\top}\mathbb{E}[(Y_n + \Delta Y_n)(Y_n + \Delta Y_n)^{\top}] &(U_n + \Delta U_n)- C_{\{X_{n+1} \mid Z_{n+1}, X_n = X_n^{\mathrm{DLRA}}\}} \|_{\mathrm{F}}^2 \Big)^{\frac{1}{2}}.
		\end{aligned}
	\end{equation}
	Basically, we are trying to find the best time-dependent $k$-rank surrogate that at each time $t_n$ approximates the discretized true filtering process minimizing the joint local error of the mean and covariance. We justify this choice by noticing that $X_{n+1}$, with starting point $X_n^{\mathrm{DLRA}}$ and conditioned to $Z_{n+1}$ is Gaussian, and hence, it is completely determined by its mean and covariance.
	
	Via solving a first-order optimality system, we retrieve the following discrete DLRA equations to compute the surrogate at time $t_{n+1}$ given the previous triplet solution $(m_{n}^{\mathrm{DLRA}}, U_n, Y_n)$ at time $t_n$:
	\begin{equation}\label{eq: DLRA-JMCO}
		\begin{aligned}
			m_{n+1}^{\mathrm{DLRA}}
			= &m_n^{\mathrm{DLRA}}
			+ \mathbb{E}[A(t_n, X_n^{\mathrm{DLRA}})]\, \Delta t
			+ C_n^{\mathrm{DLRA}} H(t_n)^\top R^{-1} (Z_{n+1} - Z_{n} - H(t_n) m_n^{\mathrm{DLRA}}\, \Delta t),\\
			U_{n+1}
			= &U_n
			+ C_{Y_n}^{-1} \mathbb{E}[Y_n\, \mathring{A}(t_n, X_n^{\mathrm{DLRA}})^\top]
			\, P_{U_n}^\perp\, \Delta t
			+ C_{Y_n}^{-1} U_n Q\, P_{U_n}^\perp\, \Delta t. \\
			Y_{n+1}
			= &Y_n
			+ U_n \mathring{A}(t_n, X_n^{\mathrm{DLRA}})\, \Delta t
			+ U_n Q^{\frac{1}{2}}\, \Delta W_n \\
			&- C_{Y_n}\, U_n H(t_n)^\top R^{-1} H(t_n) U_n^\top Y_n\, \Delta t
			- C_{Y_n}\, U_n H(t_n)^\top\, R^{-\frac{1}{2}}\, \Delta B_n,
		\end{aligned}
	\end{equation}
	where $\mathring{A}(t_n, X_n^{\mathrm{DLRA}}):= A(t_n, X_n^{\mathrm{DLRA}}) - \mathbb{E}[A(t_n, X_n^{\mathrm{DLRA}})]$
	is the centered drift. Notice that in the case of the Ensemble DLRA-JMCO filter we make evolve $M$ particles of the stochastic basis, computed using $M$ realizations of the initial condition $X_0$, i.e.\ $(X_0(\omega_i)_{i=1,\dots, M}$), and of the Brownian motions $W_t$, i.e.\ $(W_t(\omega_i)_{i=1,\dots, M})$, and $B_t$, i.e.\ $(B_t(\omega_i)_{i=1,\dots, M})$, satisfying
	\begin{equation}\label{eq: ens DLRA Y}
		\begin{aligned}
			Y_{n+1}^{i}
		= &Y_n^{i}
		+ U_n \mathring{A}(t_n, U_n^\top Y_n^{i})\, \Delta t
		+ U_n Q^{\frac{1}{2}}\, \Delta W_n^{i}\\
		&- C_{Y_n}\, U_n H(t_n)^\top R^{-1} H(t_n) U_n^\top Y_n^{i}\, \Delta t
		- C_{Y_n}\, U_n H(t_n)^\top\, R^{-\frac{1}{2}}\, \Delta B_n^{i},
		\end{aligned}
	\end{equation}
	where, with a slight abuse of notation, $C_{Y_n} = \frac{1}{M-1} \sum_{i=1}^{M} Y_n^{i}(Y_n^{i})^{\top}$ is the empirical Gramian of the stochastic basis. Notice that the expectations in \eqref{eq: DLRA-JMCO} are approximated by the empirical averages, too, leading to a noisy interacting particle system.
	\begin{Remark}[DLRA-JMCO filtering in continuous-time]
	In order to retrieve differential equations that our DLRA-JMCO filter has to satisfy in continuous time, we formally consider the limit for the step-size $\Delta t$ going to $0$ in \eqref{eq: DLRA-JMCO}. Then, the triplet $(m^{\mathrm{DLRA}}_t, U_t, Y_t)_{t \geq 0}$ is defined via the following relation
	\begin{align}\label{eq: m_t JMCO}
		\mathrm{d} m_t^{\mathrm{DLRA}}
		=& \mathbb{E}\!\left[ A\!\left(t, X_t^{\mathrm{DLRA}}\right) \right] \mathrm{d}t
		+U_t^{\top} C_{Y_t}U_t\, H(t)^\top R^{-1}
		\bigl( \mathrm{d}Z_t - H(t)\, m_t^{\mathrm{DLRA}}\, \mathrm{d}t \bigr),\\
		\mathrm{d} U_t
		=& C_{Y_t}^{-1}
		\Bigl(
		\mathbb{E}\!\left[ Y_t\, \left(A(t, X_t^{\mathrm{DLRA}})^{\top} 
		- \mathbb{E}\!\left[ A\!\left(t, X_t^{\mathrm{DLRA}}\right)^{\top} \right] \right) \right]
		+ U_t Q
		\Bigr)
		P_{U_t}^{\perp}\, \mathrm{d}t, \label{eq: U_t JMCO}
		\\
		\mathrm{d} Y_t
		=&
		\left(U_t\Bigl(
		A\!\left(t, X_t^{\mathrm{DLRA}}\right)
		- \mathbb{E}\!\left[A\!\left(t, X_t^{\mathrm{DLRA}}\right)\right]
		\Bigr)
		- C_{Y_t}U_t H(t)^\top R^{-1} H(t) U_t^\top Y_t\right) \mathrm{d}t \label{eq: Y_t JMCO}
		\\
		&\qquad
		+ U_t Q^{\frac{1}{2}}\, \mathrm{d}W_t
		- C_{Y_t}U_t  H(t)^\top R^{-\frac{1}{2}}\, \mathrm{d}B_t \nonumber .
	\end{align}	
	Then, one retrieves the DLRA process $X_t^{\mathrm{DLRA}} = m_t^{\mathrm{DLRA}} +U_t^{\top} Y_t$ via Itô's formula \cite{kloeden1992stochastic}.
	\end{Remark}
	
	Equations \eqref{eq: DLRA-JMCO}-\eqref{eq: ens DLRA Y} were the basis to derive the continuous-time equations described in \eqref{eq: m_t JMCO}-\eqref{eq: U_t JMCO}-\eqref{eq: Y_t JMCO} and, hence, might represent a possible discretization. However, some numerical and theoretical issue arise. First, one needs to guarantee orthonormality of the basis $U_n$ at each time, in order to be in compliance with the continuous-time case. Secondly, \eqref{eq: DLRA-JMCO}-\eqref{eq: ens DLRA Y} are not of straightforward interpretation when dealing with assimilation of data that occurs in larger times than that the ones of dynamics computation, hence asking for a more explicit distinction between the prediction and the analysis step. Furthermore, in \cite{kazashi2026dynamicalpartI,kazashi2026dynamicalpartII} staggered procedures, characterized by evolving first the stochastic basis and then the deterministic one, show better stability features, than completing explicit updates, suggesting to use such a procedure in the prediction step. Always for the sake of stability, in the analysis step, one considers staggered methods using the just precomputed deterministic basis and the covariance. Keeping in mind the aforementioned considerations, we state the algorithm in a \emph{prediction}-\emph{analysis} format, i.e.\ via first making evolving the dynamics (i.e.\ the prediction) and then changing the just-obtained measure in based of the acquired data, as in the usual Bayesian setting (i.e.\ the analysis). Then, the whole DLRA-JMCO filter reads as follows: for the prediction update one has
	\begin{equation}
		\begin{aligned}\label{eq: DLRA JMCO prediction}
			\widehat{m}_{n+1}
			=&  m_n^{\mathrm{DLRA}}
			+  \mathbb{E}\!\left[ A\!\left(t_n, X_n^{\mathrm{DLRA}}\right) \right]\Delta t \\
			\widetilde{U}_{n+1}
			=& U_n+ C_{\widetilde{Y}_{n+1}}^{-1}
			\Bigl(
			\mathbb{E}\!\left[ \widetilde{Y}_{n+1}\, \left(A(t_n, X_n^{\mathrm{DLRA}})
			- \mathbb{E}\!\left[ A\!\left(t_n, X_n^{\mathrm{DLRA}}\right) \right]\right)^{\top}\right]
			+ U_n Q
			\Bigr)
			P_{U_n}^{\perp}\, \Delta t
			\\
			\widetilde{Y}_{n+1}
			=&  Y_n
			+ U_n  \left(A(t_n, X_n^{\mathrm{DLRA}})
			- \mathbb{E}\!\left[ A\!\left(t_n, X_n^{\mathrm{DLRA}}\right) \right]\right) \Delta t + U_n Q^{\frac{1}{2}} \Delta W_n,
		\end{aligned}
	\end{equation}
	after which we apply an orthonormalization operation $\widehat{U}_{n+1}^{\top}\widehat{Y}_{n+1}=\widetilde{U}_{n+1}^{\top}\widetilde{Y}_{n+1}$ with $\widehat{U}_{n+1}\widehat{U}_{n+1}^{\top} = I_{k\times k}$, then
	the following analysis step takes place
	\begin{equation}\label{eq: DLRA JMCO analysis}
		\begin{aligned}
			m_{n+1}^{\mathrm{DLRA}}
			= &\widehat{m}_{n+1}
			+ \widehat{U}_{n+1}^{\top}C_{\widehat{Y}_{n+1}}\widehat{U}_{n+1} H(t_n)^\top R^{-1} (Z_{n+1} - Z_n - H(t_n)m_{n+1}^{\mathrm{DLRA}} \Delta t)\\
			U_{n+1}
			= & \widehat{U}_{n+1}\\
			Y_{n+1}
			= &\widehat{Y}_{n+1}
			- C_{\widehat{Y}_{n+1}}\, \widehat{U}_{n+1} H(t_n)^\top R^{-1} H(t_n) \widehat{U}_{n+1}^{\top} Y_{n+1}\, \Delta t
			- C_{\widehat{Y}_{n+1}}\, \widehat{U}_{n+1} H(t_n)^{\top}\, R^{-\frac{1}{2}}\, \Delta B_n,
		\end{aligned}
	\end{equation}
	where we consider a semi-implicit update in this last step for stability purposes.
    Notice that an additional orthonormalization step after the analysis update does not take place as the deterministic basis remains unchanged. 

	\section{DLRA Kalman Smoothing}
	
	The goal of (Bayesian) smoothing is to compute the marginal posterior distribution of the state $X_{k}$ given all the observations acquired up to a certain time $t_{n+1}$, i.e.\ $Z_{n+1}, Z_n, \dots, Z_1$, for all the points of the mesh $k$ such that $1\leq k \leq n+1$. Unlike filtering, the obtained observations can change the distribution of the system at previous times, too. In this work, we are considering a forward-backward smoothing procedure: given $Z_{n+1}$, we update the distribution of $X_{n+1}$ via standard filtering, then we change the distribution of the previous states ``moving backward" through Bayesian updates.
	
	We look for extending the DLRA-JMCO filtering equations in \eqref{eq: DLRA-JMCO} to the smoothing setting. In the discretized setting with affine drift, conditioned on the initial point $X_n^{\mathrm{DLRA}}$ and on the given observation $Z_{n+1}$, our final update is Gaussian, hence we just have to determine the mean and the covariance of our surrogate to completely define our approximation. In the Gaussian case, a possibility to efficiently obtain the equations for the smoothed mean and covariance is through the Rauch--Tung--Striebel (RTS) recursion \cite{rauch1965maximum,sarkka2023bayesian}, which is also called the Kalman smoother. A similar approach is extended in the case of Ensemble RTS methods. Following these observations, we want to use the same formalism to obtain relations for the smoothed triplet $(m_{k}^{\mathrm{DLRA},s}, U_k^s, (Y_k^{s,i})_{i=1,\dots,M})$ of the DLRA-JMCO filter and for the linked Kalman-Bucy procedure for all $k=n+1,n, \dots, 1$, exploiting the low-rank features of these bases. 
	
	Notice that the RTS formalism has been proposed for Gaussian probability distributions defined all over the domain $\mathbb{R}^d$. This is not the case for the DLRA, as the support of our surrogate is defined through the deterministic basis $U_n$. We formally use this recursion to change the distribution living in $U_n$ via considering the pseudoinverse of the minimal norm, which is unique, for the covariance of the DLRA. Indeed, as we will see, the smoothed subspace $U_n^s$ will remain unchanged with respect to the analysis step.
	
	At time $t_n$, via using the superscript $s$ we denote the mean, deterministic and stochastic basis, and covariance obtained by the smoothing procedures, i.e.\ $m_{k}^{\mathrm{DLRA},s}$, $U_{k}^{s}$, $Y_{k}^{s}$, and $C_{k}^{\mathrm{DLRA},s}$, whereas $m_{k}^{\mathrm{DLRA}}$, $U_{k}$, $Y_{k}$, and $C_{k}^{\mathrm{DLRA}}$ indicates the same variables at the an analysis step, and finally $\widehat{m}_{k}$, $\widehat{U}_{k}$, $\widehat{Y}_{k}$, and $\widehat{C}_{k}$, are the predicted mean, bases, and covariance, respectively. 
	
	Before going into further details, we first recall how the RTS recursion works.
	\subsection{Rauch--Tung--Striebel Smoother}\label{sec: RTS}
	
	As previously mentioned, when we are evolving a Gaussian distribution over time, one needs to only express its mean and covariance to characterize the whole measure. 
	The Rauch--Tung--Striebel recursion wants to update all the means $(m_n)_{k \leq n}$ and covariances $(C_n)_{k \leq n}$ in two phases. The former is to evolve \emph{forward} the distribution in a prediction-analysis formalism, like in the Kalman filtering \cite[Chapter 8]{sanz2023inverse}. This update gives the distribution of the random variable $X_{n+1} \mid Z_{n+1}$, via defining the filtered mean $m_{n+1}$ and covariance $C_{n+1}$. But, determining $X_{n+1} \mid Z_{n+1}$ is equivalent to determine the distribution $X_{n+1} \mid Z_{n+1}, \dots, Z_{1}$, hence our smoothed updates at the point $k=n+1$ are the filtered ones, i.e.\ $m_{n+1}^s=m_{n+1}$ and, for the covariance, $C_{n+1}^s=C_{n+1}$. This phase is called the \emph{forward} step.

	The predicted mean and covariance for \eqref{DA SDEs EM} are defined, respectively, by
	\begin{equation}\label{eq: predicted Kalman}
		\widehat{m}_{n+1}
		=m_n+\mathbb{E}[A(t_n,X_n)]\Delta t, \qquad \widehat{C}_{n+1} = \mathbb{E}[(\widehat{X}_{n+1}-\widehat{m}_{n+1})(\widehat{X}_{n+1}-\widehat{m}_{n+1})^{\top}],
	\end{equation}
	where the predicted state is
	\begin{equation*}
	\begin{aligned}
		\widehat{X}_{n+1}
		=X_n+A(t_n,X_n)\Delta t+
		Q^{\frac{1}{2}}\Delta W_n,
	\end{aligned}
	\end{equation*}
	Up to discarding terms of order higher than $O(\Delta t)$, numerically stable Kalman filtered equations are given by 
	\begin{equation}\label{eq: Kalman filter}
	\begin{aligned}
		m_{n+1}:=& \widehat{m}_{n+1}
		+ \widehat{C}_{n+1}\,H(t_n)^{\top}\,R^{-1}
		\left( Z_{n+1} - Z_n - H(t_n)\,  \widehat{m}_{n+1} \Delta t \right) \\
		C_{n+1}
		:= & 
		\widehat{C}_{n+1} - \widehat{C}_{n+1} H(t_n)^{\top}\,R^{-1}H(t_n) \widehat{C}_{n+1} \Delta t
	\end{aligned}
	\end{equation}
	
	To update the distribution of all other $(X_k)_{k \leq n+1}$, given the new observation $Z_{n+1}$, one exploits the Markovianity of the system through Bayesian updates to obtain $X_k^s$ for $k=n,n-1,\dots,0$, given the already updated smoothed distribution of $X_{k+1}^s$. This is the so-called \emph{backward} step.
	
	The smoothing equations of the mean $m_k^s$ and the covariance $C_{k}^s$ associated to the system \eqref{eq: DA SDE} are given as follows \cite[Theorem 8.2]{sarkka2023bayesian}
	
	\begin{Proposition}[Kalman Smoother]\label{prop: Kalman smoother}
		Consider the discretized updates in \eqref{DA SDEs EM}. Then, at time $t_{n+1}$ the associated smoothed mean $m_{n+1}^s$ and covariance $C_{n+1}^s$ satisfy $m_{n+1}^s=m_{n+1}$ and $C_{n+1}^s=m_{n+1}$, where $m_{n+1}$ and $C_{n+1}$ are defined through the analysis step \eqref{eq: Kalman filter}. 
		On the other hand, for all $k \leq n$, one has that 
		\begin{equation}
			\begin{aligned}
				m_{k}^s = m_k + G_k [ m_{k+1}^s - \widehat{m}_{k+1}], \qquad C_{k}^s = C_k + G_k [ C_{k+1}^s - \widehat{C}_{k+1}] G_k^{\top}
			\end{aligned}
		\end{equation}
		where $G_k := \mathbb{E}[X_k (X_k + A(t_k, X_k)^{\top}\Delta t)^{\top}] \widehat{C}_{k+1}^{-1}$, and $\widehat{m}_k$, $\widehat{C}_k$, $m_k$, and $C_k$ are the predicted mean and covariance and the filtered ones at the point $k$ given by \eqref{eq: predicted Kalman}-\eqref{eq: Kalman filter}, respectively.
\end{Proposition}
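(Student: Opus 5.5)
The plan is the standard Gaussian conditioning argument behind the Rauch--Tung--Striebel recursion, carried out on the Euler--Maruyama chain \eqref{DA SDEs EM}. Write $\mathcal{Z}_{j}:=(Z_1,\dots,Z_j)$.

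\textbf{Terminal time.} For $k=n+1$ there is nothing to prove. The smoothing distribution of $X_{n+1}$ given $\mathcal{Z}_{n+1}$ is by definition the filtering distribution produced by \eqref{eq: Kalman filter}. Hence $m_{n+1}^s=m_{n+1}$ and $C_{n+1}^s=C_{n+1}$; the statement's ``$C_{n+1}^s=m_{n+1}$'' should read $C_{n+1}^s=C_{n+1}$.

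\textbf{Backward step, assumed structure.} For $k\le n$ I proceed by backward induction on $k$. The inductive hypothesis is that $X_{k+1}\mid\mathcal{Z}_{n+1}$ has mean $m_{k+1}^s$ and covariance $C_{k+1}^s$. I work in the affine (or linearized) setting in which all the conditional laws involved are Gaussian, as the text assumes before the proposition.

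\textbf{Step 1: Markov property.} Since $X_{k+1}-X_k-A(t_k,X_k)\Delta t=Q^{1/2}\Delta W_k$ and the Brownian increments are independent of the past, the chain $(X_j,Z_j)$ is Markov. Moreover, given $X_{k+1}$, the future observations $Z_{k+2},\dots,Z_{n+1}$ are conditionally independent of $X_k$. It follows that
\[
p(x_k\mid x_{k+1},\mathcal{Z}_{n+1})=p(x_k\mid x_{k+1},\mathcal{Z}_k).
\]
One subtlety needs checking: in \eqref{DA SDEs EM} the increment $Z_{k+1}-Z_k$ depends on $X_{k+1}$ and on the independent noise $\Delta B_k$, not on $X_k$. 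Therefore conditioning on $X_{k+1}$ also screens off $Z_{k+1}$.

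\textbf{Step 2: joint law of $(X_k,\widehat X_{k+1})$.} Given $\mathcal{Z}_k$, the pair $(X_k,\widehat X_{k+1})$ is jointly Gaussian with means $(m_k,\widehat m_{k+1})$ and marginal covariances $C_k$, $\widehat C_{k+1}$ from \eqref{eq: predicted Kalman}. Its cross-covariance is $\mathrm{Cov}(X_k,X_k+A(t_k,X_k)\Delta t)$, because $\Delta W_k$ is independent of $X_k$. Gaussian conditioning then gives
\[
\mathbb{E}[X_k\mid X_{k+1},\mathcal{Z}_k]=m_k+G_k(X_{k+1}-\widehat m_{k+1}),
\qquad
\mathrm{Cov}(X_k\mid X_{k+1},\mathcal{Z}_k)=C_k-G_k\widehat C_{k+1}G_k^\top ,
\]
with $G_k$ the cross-covariance times $\widehat C_{k+1}^{-1}$. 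This coincides with the statement's definition of $G_k$, reading the expectation as centered.

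\textbf{Step 3: tower property.} Taking expectations conditional on $\mathcal{Z}_{n+1}$ and using Step 1 gives $m_k^s=m_k+G_k(m_{k+1}^s-\widehat m_{k+1})$. For the covariance, the law of total covariance gives
\[
C_k^s=C_k-G_k\widehat C_{k+1}G_k^\top+G_kC_{k+1}^sG_k^\top ,
\]
which is the claimed formula. Equivalently, one can multiply the Gaussian densities and complete the square, as in \cite[Theorem 8.2]{sarkka2023bayesian}.

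\textbf{Main obstacle.} The main obstacle is Step 1 together with the Gaussianity it relies on. For nonlinear $A$, $X_k\mid\mathcal{Z}_k$ is not Gaussian, so the linear conditional mean in Step 2 is only the best linear estimate and the identity holds only as the usual RTS (Gaussian-closure) approximation. I would state the proof for affine $A$, where everything is exact, and note that the formula defines the smoother otherwise. I would also record that $\widehat C_{k+1}$ is invertible, e.g.\ if $Q\succ0$; otherwise the inverse is replaced by the minimal-norm pseudoinverse, as the paper does in the DLRA setting.
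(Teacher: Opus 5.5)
Your proposal is correct and follows essentially the argument the paper relies on. The paper gives no proof of its own and defers to the RTS theorem \cite[Theorem 8.2]{sarkka2023bayesian}, which is proved by the same Markov-property reduction $p(x_k\mid x_{k+1},\mathcal{Z}_{n+1})=p(x_k\mid x_{k+1},\mathcal{Z}_k)$ followed by Gaussian conditioning; your tower-property and law-of-total-covariance step is an equivalent substitute for marginalizing the joint Gaussian of $(X_k,X_{k+1})$ given all data. Your corrections of the typos ($C_{n+1}^s=C_{n+1}$, centered cross-covariance in $G_k$) and your restriction to affine drift are right. The one caveat, which the paper shares, is that \eqref{eq: Kalman filter} drops $O(\Delta t^2)$ terms, so identifying $m_k, C_k$ with the exact filtering moments of \eqref{DA SDEs EM} holds only up to that truncation.
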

   
   Proposition \ref{prop: Kalman smoother} provides the equation for the mean and the covariance of our surrogate, and of course they give the exact distribution of the system when we are dealing with an affine drift. Therefore, we will make use of this result to obtain the smoothing updates for the DLRA Kalman-Bucy type filter. On the other hand, to obtain the updates for general $(m_{k}^{\mathrm{DLRA},s}, U_k^s, (Y_k^{s,i})_{i=1,\dots,M})$, as the stochastic discretization of DLRA advances a noisy interacting particle system, we exploit the updates of the Ensemble RTS Smoother \cite{raanes2016ensemble}. This formalism allows us to simplify the derivation of the updates, giving a practical algorithm to implement, which is the ensemble-type generalization of the RTS smoother.

	In this regard, consider to evolve $M$ samples of $X_{n+1}$ over time, i.e.\ $(X_{n+1}^{i})_{i=1,\dots,M}$, which are computed using $M$ realizations of the initial condition $X_0$, i.e.\ $(X_0(\omega_i)_{i=1,\dots, M}$), and of the Brownian motions $W_t$, i.e.\ $(W_t(\omega_i)_{i=1,\dots, M})$, and $B_t$, i.e.\ $(B_t(\omega_i)_{i=1,\dots, M})$. Define the ensemble matrices for the prediction and the analysis state as $\widehat{\mathbb{X}}_{n+1} = \bigl[\widehat{X}_{n+1}^{1}, \dots, \widehat{X}_{n+1}^{M}\bigr] \in \mathbb{R}^{d \times M}$ and $\mathbb{X}_{n+1} = \bigl[X_{n+1}^{1}, \dots, X_{n+1}^{M}\bigr]\in \mathbb{R}^{d \times M}$, respectively. Then, for each $i \in \{1, \dots, M\}$ the equation of $X_{n+1}^{i}$ for the prediction reads as
	\begin{equation*}
		\begin{aligned}
			\widehat{X}_{n+1}^{i}=X_n^{i}+A(t_n,X_n^{i})\Delta t+Q^{\frac{1}{2}}\Delta W_n^{i},
		\end{aligned}
	\end{equation*}
	 and, discarding terms of order higher than $O(\Delta t)$ and preserving numerical stability, for the analysis step one has
	\begin{equation}\label{eq: smoothed X ensemble}
		\begin{aligned}
			X_{n+1}^{i}=\widehat{X}_{n+1}^{i}+\widehat{C}_{n+1}H(t_n)R^{-1}
			\left(Z_{n+1}-Z_n-H(t_n)\widehat{X}_{n+1}^{i}\Delta t+R^{\frac{1}{2}}\Delta B_n^{i},
			\right),
		\end{aligned}
	\end{equation}
	where we defined the predicted ensemble covariance of $(X_{n+1}^{i})_{i=1,\dots,M}$ as $\widehat{C}_{n+1} := \frac{1}{M-1}\widehat{\mathbb{X}}_{n+1}\widehat{\mathbb{X}}_{n+1}^{\top} \in \mathbb{R}^{d \times d}$. Likewise, we define the filtered ensemble covariance as $C_{n+1} := \frac{1}{M-1}\mathbb{X}_{n+1}\mathbb{X}_{n+1}^{\top} \in \mathbb{R}^{d \times d}$, as well as, with a slight abuse of notation, the predicted and filtered ensemble mean as, respectively,
	$$\widehat{m}_{n+1} = \frac{1}{M} \sum_{i=1}^{M} \widehat{X}_{n+1}^{i}, \qquad m_{n+1} = \frac{1}{M} \sum_{i=1}^{M} X_{n+1}^{i}.$$
	
	Then, the proposed smoothed update of the state $(X_{k}^{s,i})_{i=1,\dots,M}$, as well as the mean update $m_k^s$, for all $k \leq n$ are given by the following proposition 
	\begin{Proposition}[Ensemble Kalman Smoother \cite{raanes2016ensemble}]\label{prop: Ens RTS}
	Consider $M$ samples of the fully discretized SDE system in \eqref{DA SDEs EM}. Then, for all $i \in \{1,\dots,M\}$, the smoothed particle system associated to \eqref{DA SDEs EM} satisfies the following relations: at time $t_{n+1}$ one has
	$X_{n+1}^{s,i}=X_{n+1}^{i}$, for $X_{n+1}^{i}$ satisfying \eqref{eq: smoothed X ensemble}, and for all $k=n,n-1,\ldots,1$
	\begin{equation}\label{eq: smooth ens eq}
		 X_k^{s,i}=X_k^{i}+G_k\left(X_{k+1}^{s,i}-\widehat{X}_{k+1}^{i}\right), \qquad m_k^{s}=m_k+G_k\left(m_{k+1}^{s}-\widehat{m}_{k+1}\right),
	\end{equation}
	where we have
	\begin{equation*}
		\begin{aligned}
			\rho_k &= \mathbb{X}_k-m_k \mathbbm{1}^{\top},&	\widehat{\rho}_{k+1}&=\widehat{\mathbb{X}}_{k+1}-\widehat{m}_{k+1}\mathbbm{1}^{\top},\\
			C_{k,k+1}&=\frac{1}{M-1}\rho_k\widehat{\rho}_{k+1}^{\top},&G_k&=C_{k,k+1}\widehat{C}_{k+1}^{\dagger}
		\end{aligned}
	\end{equation*}
   for $\mathbbm{1} = \big[1, \dots, 1\big] \in \mathbb{R}^M$, where $\widehat{X}_k^{i}$, $\widehat{m}_k$, and $\widehat{C}_k$ are the $i$-th particle of the predicted state, the mean, and the ensemble covariance, and $X_k^{i}$, $m_k$, and $C_k$ are the $i$-th particle of the filtered ones, respectively, and $\dagger$ denotes the pseudoinverse of minimal norm.
\end{Proposition}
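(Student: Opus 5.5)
The plan is to derive the ensemble RTS relations from the Gaussian conditioning identity behind Proposition~\ref{prop: Kalman smoother}, applied to the empirical (ensemble) Gaussian approximation, and then to check the mean relation by averaging. The structural fact is the Markov property of the discretized chain \eqref{DA SDEs EM}: conditionally on $X_{k+1}$, the state $X_k$ is independent of the future observations $Z_{k+2},\dots,Z_{n+1}$. Hence the smoothed law of $X_k$ is obtained by conditioning the joint law of $(X_k,\widehat{X}_{k+1})$ on the smoothed law of $X_{k+1}$. Here $X_k$ is the filtered state and $\widehat X_{k+1}$ its prediction.

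\textbf{Step 1: the joint ensemble.} Replace the joint law of $(X_k,\widehat X_{k+1})$ by the Gaussian with the ensemble moments $m_k$, $\widehat m_{k+1}$, $C_k$, $\widehat C_{k+1}$ and cross-covariance $C_{k,k+1}=\frac{1}{M-1}\rho_k\widehat\rho_{k+1}^\top$. The regression of $X_k$ on $\widehat X_{k+1}$ then gives
\[
X_k = m_k + G_k(\widehat X_{k+1}-\widehat m_{k+1}) + \varepsilon_k ,
\]
with $G_k = C_{k,k+1}\widehat C_{k+1}^{\dagger}$ and residual $\varepsilon_k$ uncorrelated with $\widehat X_{k+1}$. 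The pseudoinverse is needed because $\widehat C_{k+1}$ has rank at most $M-1$. The product $C_{k,k+1}\widehat C_{k+1}^\dagger \widehat C_{k+1}$ still equals $C_{k,k+1}$, since the columns of $\widehat\rho_{k+1}^\top$ lie in the range of $\widehat\rho_{k+1}^\top$, which is the range of $\widehat C_{k+1}$.

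\textbf{Step 2: particle update.} Smoothing keeps the conditional law of $X_k$ given $X_{k+1}$ fixed and replaces the law of $X_{k+1}$ by its smoothed one. Particle-wise, one substitutes $X_{k+1}^{s,i}$ for $\widehat X_{k+1}^i$ in the regression while keeping each particle's own residual $\varepsilon_k^i = X_k^i - m_k - G_k(\widehat X^i_{k+1}-\widehat m_{k+1})$. This gives
\[
X_k^{s,i}=m_k+G_k(X_{k+1}^{s,i}-\widehat m_{k+1})+\varepsilon_k^i = X_k^i + G_k\bigl(X_{k+1}^{s,i}-\widehat X_{k+1}^i\bigr),
\]
which is the first identity in \eqref{eq: smooth ens eq}. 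Initialization at $k=n+1$ is immediate, since there the smoothed and filtered laws coincide, exactly as in the forward step of Proposition~\ref{prop: Kalman smoother}.

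\textbf{Step 3: mean and consistency.} Averaging the particle relation over $i$ yields the mean relation directly, since $G_k$ does not depend on $i$ and empirical averaging is linear. A consistency check is that the empirical covariance of $X_k^{s,i}$ equals
\[
C_k + G_k\bigl(C^s_{k+1}-\widehat C_{k+1}\bigr)G_k^\top
\]
up to cross terms between $\varepsilon_k$ and $X^{s}_{k+1}-\widehat X_{k+1}$. These cross terms vanish in the Gaussian, large-$M$ limit, which recovers Proposition~\ref{prop: Kalman smoother}.

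\textbf{Main obstacle.} The hardest part is the justification in Step 2 that reusing the residuals $\varepsilon_k^i$ is legitimate. Exactly, this needs $\varepsilon_k$ to be independent of $\widehat X_{k+1}$, which holds only in the Gaussian (affine-drift) setting. For finite $M$ and nonlinear $A$ one only has empirical uncorrelatedness, so the statement must be read as a definition of the ensemble smoother consistent with the RTS recursion, following \cite{raanes2016ensemble}. My plan is to prove exactness in the affine-Gaussian case with $M\to\infty$, and otherwise verify the algebraic identities above.
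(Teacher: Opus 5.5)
The paper gives no proof of this proposition. It is taken from \cite{raanes2016ensemble} and serves, in effect, as the definition of the ensemble RTS smoother, which is then specialized to the factors $(m,U,Y)$ in Proposition~\ref{prop: DLR RTS}. Your proposal is correct and goes further, because it explains where the recursion comes from. You write the filtered anomalies as a regression on the predicted ones, $\rho_k = G_k\widehat\rho_{k+1}+\varepsilon_k$ with $\varepsilon_k\widehat\rho_{k+1}^\top=0$. You then replace $\widehat\rho_{k+1}$ by the smoothed anomalies while keeping each particle's residual, and obtain the mean relation by averaging. This gives two things the citation does not. First, it yields the exact identity
\[
C_k^s = C_k + G_k\bigl(C_{k+1}^s-\widehat C_{k+1}\bigr)G_k^\top + \tfrac{1}{M-1}\bigl(\varepsilon_k D_{k+1}^\top G_k^\top + G_k D_{k+1}\varepsilon_k^\top\bigr), \qquad D_{k+1} := \rho^s_{k+1}-\widehat\rho_{k+1},
\]
which ties the particle recursion to Proposition~\ref{prop: Kalman smoother}. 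Second, it makes explicit that for finite $M$ or nonlinear $A$ the statement is a definition rather than a theorem, which is exactly how the paper uses it.

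Three small repairs are needed.

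\emph{The range argument in Step 1.} As written it is dimensionally off: $\mathrm{range}(\widehat\rho_{k+1}^\top)\subset\mathbb{R}^M$, whereas $\mathrm{range}(\widehat C_{k+1})\subset\mathbb{R}^d$. The correct reason is this. The rows of $C_{k,k+1}=\frac{1}{M-1}\rho_k\widehat\rho_{k+1}^\top$ are linear combinations of the transposed columns of $\widehat\rho_{k+1}$. Moreover $\mathrm{range}(\widehat\rho_{k+1})=\mathrm{range}(\widehat C_{k+1})$, and $\widehat C_{k+1}^\dagger\widehat C_{k+1}$ is the orthogonal projector onto that range. Equivalently, $G_k=\rho_k\widehat\rho_{k+1}^\dagger$, so $\varepsilon_k=\rho_k\bigl(I-\widehat\rho_{k+1}^\dagger\widehat\rho_{k+1}\bigr)$. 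This residual vanishes whenever $\widehat\rho_{k+1}$ has the maximal rank $M-1$; the cross terms then disappear and the covariance recursion holds exactly for any drift. In the DLRA regime $M\gg k$ of Proposition~\ref{prop: DLR RTS}, by contrast, the residual is generically nonzero, so your ``main obstacle'' is real precisely where the paper needs the result.

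\emph{The Markov property.} Conditioning on $X_{k+1}$ alone does not make $X_k$ independent of $Z_{k+2}$. The reason is that $Z_{k+2}$ contains $Z_k$, which depends on $X_k$ through the implicit observation scheme in \eqref{DA SDEs EM}. You should put the past observations in the conditioning, or phrase the property for the increments $Z_{j+1}-Z_j$ with $j\ge k$. The case $j=k$ matters: it is what justifies using the pair $(X_k,\widehat X_{k+1})$ conditioned only on data up to $t_k$.

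\emph{Independence in the exactness argument.} You need $\varepsilon_k^i$ to be independent of $X_{k+1}^{s,i}$, not merely of $\widehat X_{k+1}^i$. This does hold in the affine case as $M\to\infty$. There the gains become deterministic, so $X_{k+1}^{s,i}$ depends on particle $i$'s time-$t_k$ data only through $\widehat X_{k+1}^i$ and fresh noises $\Delta B_k^i,\Delta W_{k+1}^i,\dots$. Those noises are independent of $(X_k^i,\widehat X_{k+1}^i)$, which gives the required independence.
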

	
	\subsection{DLRA-JMCO Smoother}\label{sec: DLRA-JMCO smoother}
	
	As we are working with the ensemble DLRA-JMCO filter, we actually consider the set of $M$ particles $(Y_{k}^{s,i})_{i=1,\dots,M}$, $(Y_{k}^{i})_{i=1,\dots,M}$, $(\widehat{Y}_{k}^{i})_{i=1,\dots,M}$, for the smoothed, filtered, and predicted stochastic basis, respectively. Analogously to the notation of \eqref{sec: RTS}, we can collect all the realizations in the following sample matrices:
	\begin{equation*}
		\mathbb{Y}^s_k = \bigl[Y_{k}^{s,1}, \dots, Y_{k}^{s,M} \bigr] \in \mathbb{R}^{k \times M}, \quad \mathbb{Y}_k = \bigl[Y_{k}^{1}, \dots, Y_{k}^{M} \bigr] \in \mathbb{R}^{k \times M}, \quad \widehat{\mathbb{Y}}_k = \bigl[\widehat{Y}_{k}^{1}, \dots, \widehat{Y}_{k}^{M} \bigr] \in \mathbb{R}^{k \times M}.
    \end{equation*}
    We assume from now on that all the deterministic and stochastic bases are of full rank equal to $k$, and hence the covariances of the stochastic components are invertible. However, the proposed equations will remain still valid also when considering their pseudoinverse. 
		
	The Ensemble Rauch-Tung-Striebel recursion explained in Proposition \ref{prop: Ens RTS} allows to recover the state and the covariance of an ensemble Kalman-type filter via closed formulae involving matrix multiplications. Exploiting the orthogonality of the deterministic basis, we can retrieve cheap-to-compute updates for the smoothed triplet ($m_{k}^{\mathrm{DLRA},s}$, $U_{k}^{s}$, $(Y_{k}^{s,i})_{i=1,\dots,M}$), as we see in the following Proposition.
	\begin{Proposition}[Ensemble DLRA-JMCO Smoother]\label{prop: DLR RTS}	
	The DLRA-JMCO Smoother in a forward-backward formalism satisfies the following relations:
	for the forward step at time $t_{n+1}$, the triplet $(m_{n+1}^{\mathrm{DLRA},s}, U_{n+1}^{s},$ $(Y_{n+1}^{s,i})_{i=1,\dots,M})$ satisfies the filtering relations \eqref{eq: DLRA JMCO prediction}-\eqref{eq: DLRA JMCO analysis}, for the mean, the deterministic basis, and the stochastic one, respectively, 
	whereas for the backward step, for all $k=n, n-1, ..., 0$, the triplet $(m_{k}^{\mathrm{DLRA},s}, U_{k}^{s}, (Y_{k}^{s,i})_{i=1,\dots,M})$ satisfies, 
	\begin{equation}\label{eq: DLR-JMCO smoother updates}
		\begin{aligned}
			m_{k}^{\mathrm{DLRA},s}= &m^{\mathrm{DLRA}}_{k}+U_k^{\top}	\mathbb{Y}_k\widehat{\mathbb{Y}}_{k+1}^{\top}
			(\widehat{\mathbb{Y}}_{k+1}\widehat{\mathbb{Y}}_{k+1}^{\top})^{-1} \widehat{U}_{k+1}\big[m^{\mathrm{DLRA},s}_{k+1}-\widehat{m}_{k+1}\big]\\
			U_{k}^{s} = U_{k}=\widehat{U}_{k}, & \quad Y_k^{s,i}
			=
			Y_k^{i}
			+
			\mathbb{Y}_k\widehat{\mathbb{Y}}_{k+1}^{\top}
			(\widehat{\mathbb{Y}}_{k+1}\widehat{\mathbb{Y}}_{k+1}^{\top})^{-1}
			\big[
			Y_{k+1}^{s,i}
			-
			\widehat{Y}_{k+1}^{i}
			\big].
		\end{aligned}
	\end{equation}
	Finally, one has  $X_{k}^{\mathrm{DLRA},s,i} = m_{k}^{\mathrm{DLRA},s} +(U_{k}^{s})^{\top}Y_{k}^{s,i}$.
	\begin{proof}
	The proof follows by employing relations \eqref{eq: smooth ens eq} for the DLRA equations. Using the same notation as in Proposition \ref{prop: Ens RTS}, we have
	\begin{equation*}
		\begin{aligned}
			\rho_k
			=
			U_k^{\top}\mathbb{Y}_k,
			\qquad
			\widehat{\rho}_{k+1}
			=
			\widehat{U}_{k+1}^{\top}\widehat{\mathbb{Y}}_{k+1},
		\end{aligned}
	\end{equation*}
	as the stochastic basis updates are (formally) assumed to be centered, and 
	\begin{equation*}
		\begin{aligned}
			C_{k,k+1}
			=\frac{1}{M-1}U_k^{\top}\mathbb{Y}_k\widehat{\mathbb{Y}}_{k+1}^{\top}\widehat{U}_{k+1},
			\qquad
			\widehat{C}_{k+1}=\frac{1}{M-1}\widehat{U}_{k+1}^{\top}\widehat{\mathbb{Y}}_{k+1}\widehat{\mathbb{Y}}_{k+1}^{\top}\widehat{U}_{k+1}.
		\end{aligned}
	\end{equation*}
	As $\left( \widehat{U}_{k+1}^{\top}(\frac{1}{M-1}\widehat{\mathbb{Y}}_{k+1}\widehat{\mathbb{Y}}_{k+1}^{\top}) \widehat{U}_{k+1}\right)^{\dagger} =\widehat{U}_{k+1}^{\top}(\frac{1}{M-1}\widehat{\mathbb{Y}}_{k+1}\widehat{\mathbb{Y}}_{k+1}^{\top})^{-1} \widehat{U}_{k+1}$ thanks to the orthonormality of the rows of $\widehat{U}_{k+1}$, then the mixed covariance update $G_k$ reads as 
\begin{equation*}
	\begin{aligned}
		G_k&=\frac{1}{M-1}U_k^{\top}\mathbb{Y}_k\widehat{\mathbb{Y}}_{k+1}^{\top}\widehat{U}_{k+1}\widehat{U}_{k+1}^{\top} (M-1)(\widehat{\mathbb{Y}}_{k+1}\widehat{\mathbb{Y}}_{k+1}^{\top})^{-1}\widehat{U}_{k+1},
		\\
		&=
		U_k^{\top}
		\mathbb{Y}_k\widehat{\mathbb{Y}}_{k+1}^{\top}
		(\widehat{\mathbb{Y}}_{k+1}\widehat{\mathbb{Y}}_{k+1}^{\top})^{-1}
		\widehat{U}_{k+1},
	\end{aligned}
\end{equation*}
where we exploit the orthogonality of the rows of $\widehat{U}_{k+1}$. 
Concerning the mean, one has
\begin{equation*}
	\begin{aligned}
		m_n^{\mathrm{DLRA},s}
		=
		m_n^{\mathrm{DLRA}}
		+
		U_n^{\top}
		\mathbb{Y}_n\widehat{\mathbb{Y}}_{n+1}^{\top}
		(\widehat{\mathbb{Y}}_{n+1}\widehat{\mathbb{Y}}_{n+1}^{\top})^{-1}
		\widehat{U}_{n+1}
		\left(
		m_{n+1}^{\mathrm{DLRA},s}
		-
		\widehat{m}_{n+1}
		\right),
	\end{aligned}
\end{equation*}

Now we want to derive updates for the deterministic and stochastic bases. Via using relation  \eqref{eq: smooth ens eq}, one has that 
\begin{equation}\label{eq: intermediate X}
	\begin{aligned}
		X_n^{\mathrm{DLRA},s,i} = & m_n^{\mathrm{DLRA},s} +(U_n^s)^{\top}Y_n^{s,i} \\
		=&\, m_n^{\mathrm{DLRA},s} +
		U_n^{\top}Y_n^{i}
		+
		U_n^{\top}
		\mathbb{Y}_n\widehat{\mathbb{Y}}_{n+1}^{\top}
		(\widehat{\mathbb{Y}}_{n+1}\widehat{\mathbb{Y}}_{n+1}^{\top})^{-1}
		\widehat{U}_{n+1}
		\left(
		(U_{n+1}^{s})^{\top}Y_{n+1}^{s,i}
		-
		\widehat{U}_{n+1}^{\top}\widehat{Y}_{n+1}^{i}
		\right)
		\\
		=&\, m_n^{\mathrm{DLRA},s} +
		U_n^{\top}
		\Big(
		Y_n^{i}
		+
		\mathbb{Y}_n\widehat{\mathbb{Y}}_{n+1}^{\top}
		(\widehat{\mathbb{Y}}_{n+1}\widehat{\mathbb{Y}}_{n+1}^{\top})^{-1}
		\widehat{U}_{n+1}
		\big[
		(U_{n+1}^{s})^{\top}Y_{n+1}^{s,i}
		-
		\widehat{U}_{n+1}^{\top}\widehat{Y}_{n+1}^{i}
		\big]
		\Big).
	\end{aligned}
\end{equation}

From relation \eqref{eq: intermediate X} one can see that the DLRA smoothing procedure
does not change the subspace $U$. Thus $U_{n+1}^{s} = U_{n+1} = \widehat{U}_{n+1}$ for all $n$, and hence from \eqref{eq: intermediate X} and using the fact that $U_k^s$ is always of full rank $k$ we have
\begin{equation*}
	\begin{aligned}
	(U_n^s)^{\top}Y_n^{s,i} 	=
		U_n^{\top}
		\Big(
		Y_n^{i}
		+
		\mathbb{Y}_n\widehat{\mathbb{Y}}_{n+1}^{\top}
		(\widehat{\mathbb{Y}}_{n+1}\widehat{\mathbb{Y}}_{n+1}^{\top})^{-1}
		\big[
		Y_{n+1}^{s,i}
		-
		\widehat{Y}_{n+1}^{i}
		\big]
		\Big).
	\end{aligned}
\end{equation*}
Therefore, for all $k=n,n-1,\ldots,1$ our smoothed stochastic basis evolves by the following relation
\begin{equation*}
	\begin{aligned}
		Y_k^{s,i}
		=
		\Big(
		Y_k^{i}
		+
		\mathbb{Y}_k\widehat{\mathbb{Y}}_{k+1}^{\top}
		(\widehat{\mathbb{Y}}_{k+1}\widehat{\mathbb{Y}}_{k+1}^{\top})^{-1}
		\big[
		Y_{k+1}^{s,i}
		-
		\widehat{Y}_{k+1}^{i}
		\big]
		\Big).
	\end{aligned}
\end{equation*}
\end{proof}
\end{Proposition}

One can notice that \eqref{eq: DLR-JMCO smoother updates} results in cheap-to-compute updates when $k \ll d$ and when one can approximate the (possibly non-linear) drift $A$ in a low-rank format. Furthermore, for stability reasons, we computed the filtered equations for the mean and the stochastic basis in a staggered way (see \cite{marzouk2026filter} for more details). In detail, the smoothing procedure generally needs to store three quantities: the predicted, the filtered, and the smoothed ones. Therefore, DLRA would also enhance savings in terms of storage space.
We summarize the whole procedure in Algorithm \ref{alg: DLRA-JMCO Smoother algorithm}.
	\begin{algorithm}[!h]
	\caption{DLRA-JMCO Smoother}\label{alg: DLRA-JMCO Smoother algorithm}
	\begin{flushleft}
		\textbf{Input}: Number of samples $M$, initial data  ($m_0$, $U_0$, $(Y_0^{i})_{i=1,\dots,M}$), $\Delta t$, Observations $\{Z_n\}_{n\in0,\dots,N}$
		
		\textbf{Output:} smoothed approximation $\{X^{\mathrm{DLRA},s,i}_n= m_n^{\mathrm{DLRA},s} + (U_n^s)^{\top}Y^{s,i}_n\}_{n=0,\ldots, N}$ for $i=1,\dots,M$. 
	\end{flushleft}
	\begin{algorithmic}[1]
		
		\ForAll {$n \in  \{0, \ldots, N-1\}$} 
		
		\State Generate $M$ Brownian increments $\Delta W_n^{i} \sim \mathcal{N}(0,\Delta t I_{m \times m})$, $\Delta B_n^{i} \sim \mathcal{N}(0,\Delta t I_{h \times h})$  with $i \in \{1,\dots,M\}.$
		
		\State Compute $\widehat{m}_{n+1}
		= m_n^{\mathrm{DLRA}}
		+ \frac{1}{M} \sum_{i = 1}^{M} A(t_n,  m_n^{\mathrm{DLRA}} + U_n^{\top}Y_n^{i})\, \Delta t.$
		
		\State Compute $\widetilde{Y}^{i}_{n+1} = Y_n^{i} + U_n \mathring{A}\!\left(t_n, m_n^{\mathrm{DLRA}} + U_n^{\top}Y_n^{i}\right) \Delta t_n +  U_n Q^{\frac{1}{2}} \Delta W_n^{i}$, for all $i \in \{1,\dots,M\},$
		\qquad where $\mathring{A}\!\left(t_n, m_n^{\mathrm{DLRA}} + U_n^{\top}Y_n^{i}\right) = A \!\left(t_n, m_n^{\mathrm{DLRA}}+ U_n^{\top}Y_n^{i}\right) - \frac{1}{M} \sum_{i = 1}^{M} A(t_n,  m_n^{\mathrm{DLRA}} + U_n^{\top}Y_n^{i})$
		
		\State Assemble $\overline{C}_{\widetilde{Y}_{n+1}} = \frac{1}{M-1} \sum_{i = 1}^{M} \widetilde{Y}_{n+1}^{i}(\widetilde{Y}_{n+1}^{i})^{\top}$
		\State Compute $\widetilde{U}_{n+1}$: 
		$$\overline{C}_{\widetilde{Y}_{n+1}} \widetilde{U}_{n+1}
		= \overline{C}_{\widetilde{Y}_{n+1}}  U_n
		+  \frac{1}{M-1} \sum_{i = 1}^{M} \left( \widetilde{Y}_{n+1}^{i}\, \mathring{A}(t_n,  m_n^{\mathrm{DLRA}} + U_n^{\top}Y_n^{i})^\top\right)
		\, P_{U_n}^\perp\, \Delta t
		+ U_n Q\, P_{U_n}^\perp\, \Delta t. $$
		\State Reorthonormalize the deterministic basis: find   $(U_{n+1}, (Y^{i}_{n+1})_{i=1,\dots,M})$ such that:
		\begin{equation*}
			\widehat{U}^{\top}_{n+1} \widehat{Y}_{n+1}^{i}=\widetilde{U}_{n+1}^{\top} \widetilde{Y}_{n+1}^{i}, \quad \widehat{U}_{n+1}\widehat{U}^{\top}_{n+1} = I_{k\times k}.
		\end{equation*}
		via $(\widehat{U}^{\top}_{n+1}, R_{n+1}) = \texttt{QR}(\widetilde{U}^{\top}_{n+1})$ and $\widehat{Y}_{n+1}^{i} = R_{n+1} \widetilde{Y}^{i}_{n+1}$, for all $i \in \{1,\dots,M\}.$
		
		\State Recentering: $\widehat{Y}_{n+1}^{i} = \widehat{Y}_{n+1}^{i} - \frac{1}{M} \sum_{j= 1}^{M}  \widehat{Y}_{n+1}^{j}$  
		for all $i$ and $\widehat{m}_{n+1} = \widehat{m}_{n+1} + \widehat{U}_{n+1}^{\top} \left(\frac{1}{M} \sum_{j= 1}^{M}   \widehat{Y}_{n+1}^{j}\right)$
		
		\State With $\Delta Z_n = Z_{n+1}-Z_n$ solve for $m_{n+1}^{\mathrm{DLRA}}$ and $Y_{n+1}^{i}$
		\begin{equation*}
			\begin{aligned}
				(I_{d \times d}+ \widehat{U}_{n+1}^{\top} \overline{C}_{\widehat{Y}_{n+1}} \widehat{U}_{n+1} H(t_n)^\top R^{-1} H(t_n) \Delta t) m_{n+1}^{\mathrm{DLRA}}
				= &\widehat{m}_{n+1}
				+ \widehat{U}_{n+1}^{\top} \overline{C}_{\widehat{Y}_{n+1}} \widehat{U}_{n+1} H(t_n)^\top R^{-1}  \Delta Z_n,\\
				(I_{k \times k} + \overline{C}_{\widehat{Y}_{n+1}} \, \widehat{U}_{n+1} H(t_n)^\top R^{-1} H(t_n) \widehat{U}^{\top}_{n+1} \Delta t )Y_{n+1}^{i}
				= &\widehat{Y}_{n+1}^{i}
				- \overline{C}_{\widehat{Y}_{n+1}} \, \widehat{U}_{n+1}H(t_n)^{\top} R^{-\frac{1}{2}}\, \Delta B_n^{i}, 
			\end{aligned}
		\end{equation*}
		\quad for all $i \in \{1,\dots,M\},$ respectively, and then set $m_{n+1}^{\mathrm{DLRA},s}=m_{n+1}^{\mathrm{DLRA}}$, $Y_{n+1}^{s,i}=Y_{n+1}^{i}$, $U^s_{n+1} =  U_{n+1} = \widehat{U}_{n+1}$, after another recentering.
		\State 	For all $k=n, n-1, ..., 0$, for all $i \in \{1,\dots,M\},$ with $\Delta Z_k = Z_{k+1}-Z_k$ compute
		\begin{equation*}
			\begin{aligned}
				m_{k}^{\mathrm{DLRA},s}= &m^{\mathrm{DLRA}}_{k}+U_k^{\top}	\mathbb{Y}_k\widehat{\mathbb{Y}}_{k+1}^{\top}
				(\widehat{\mathbb{Y}}_{k+1}\widehat{\mathbb{Y}}_{k+1}^{\top})^{-1} \widehat{U}_{k+1}\big[m^{\mathrm{DLRA},s}_{k+1}-\widehat{m}_{k+1}\big] \\
				U_{k}^{s} = U_{k}, & \quad Y_k^{s,i}
				=
				Y_k^{i}
				+
				\mathbb{Y}_k\widehat{\mathbb{Y}}_{k+1}^{\top}
				(\widehat{\mathbb{Y}}_{k+1}\widehat{\mathbb{Y}}_{k+1}^{\top})^{-1}
				\big[
				Y_{k+1}^{s,i}
				-
				\widehat{Y}_{k+1}^{i}
				\big].
			\end{aligned}
		\end{equation*}
		\EndFor
	\end{algorithmic}
\end{algorithm}

\section{DLRA-JMCO Kalman-Bucy-type Smoothing}\label{sec: DLRA KBF smoother}

We now specialize the DLRA Smoothing equations derived in Proposition \ref{prop: DLR RTS} for the DLRA-JMCO procedure, to the affine-drift case, i.e.\
\begin{equation*}
	A(t,x)=A(t)x+f(t),
\end{equation*}
with $A: [0, \infty) \to \mathbb{R}^{d \times d}$ and $f: [0, \infty) \to \mathbb{R}^{d}$.
In the case of linear dynamics and observations, and Gaussian prior,
one just needs to track the mean and the covariance of the algorithm. In that case, one can obtain respectively a Kalman-Bucy (KB) smoother, as stated in Proposition \ref{prop: Kalman smoother}.

We recall that for the DLRA-JMCO-KB type filter obtained in \cite{marzouk2026filter}, the mean $m_n^{\mathrm{DLRA}}$, the covariance of the stochastic basis $C_{Y_n}$, and the deterministic basis $U_n$ satisfy the following relations
for the prediction step
\begin{equation}\label{eq: DLR JMCO KBF prediction}
	\begin{aligned}
		m_{n+1}
		=&
		m_n^{\mathrm{DLRA}}
		+
		(A(t_n)m_n^{\mathrm{DLRA}} + f(t_n))\Delta t\\
		\widetilde{C}_{\widehat{Y}_{n+1}}
		=& C_{Y_{n}} +
		\bigl(
		U_n A(t_n) U_n^\top C_{Y_{n}}
		+ C_{Y_n} U_n A(t_n)^\top U_n^\top
		+ U_n Q U_n^\top
		\bigr)\, \Delta t \\
		\widetilde{U}_{n+1}
		= &U_n
		+ U_n A(t_n)^\top
		\, P_{U_n}^\perp\, \Delta t
		+ C_{Y_n}^{-1} U_n Q\, P_{U_n}^\perp\, \Delta t, \\
	\end{aligned}
\end{equation}
respectively. Then, to remain in compliance with the continuous-time DLRA, one applies the usual orthonormalization procedure on the deterministic basis, for instance via QR decomposition as $(\widehat{U}_{n+1}^{\top}, R) =\texttt{QR}(\widetilde{U}_{n+1}^{\top})$, and hence we set $C_{\widehat{Y}_{n+1}}= R \widetilde{C}_{\widehat{Y}_{n+1}} R^{\top}$. Then the analysis step reads as
\begin{equation}\label{eq: DLR JMCO KBF analysis}
	\begin{aligned}
		m_{n+1}^{\mathrm{DLRA}}
		=&
		\widehat{m}_{n+1}+
		\widehat{U}_{n+1}^{\top} C_{\widehat{Y}_{n+1}} \widehat{U}_{n+1} H(t_n)^{\top}R^{-1}
		\left(
		Z_{n+1} - Z_n
		-
		H(t_n)m_{n+1}^{\mathrm{DLRA}}\Delta t
		\right)\\
		C_{Y_{n+1}}
		=& 	C_{\widehat{Y}_{n+1}}
		- C_{\widehat{Y}_{n+1}} \widehat{U}_{n+1}  H(t_n)^\top R^{-1} H(t_n)\widehat{U}_{n+1}^\top C_{\widehat{Y}_{n+1}} \Delta t \\
	\end{aligned}
\end{equation}
where the predicted deterministic basis is equivalent to the one of the analysis, i.e.\ $U_{n+1}=\widehat{U}_{n+1}$, and we update the mean and the stochastic basis implicitly for stability reasons.
The following proposition stated the smoothed equations for the mean, the reduced covariance, and the subspace, where we recall that the equation of the predicted, the filtered, and the smoothed covariance of the DLRA are retrieved, respectively, by the following relations for all $k=0,\dots, N$
\begin{equation}\label{eq: covariances}
	\widehat{C}_{k}=\widehat{U}_{k}^{\top}C_{\widehat{Y}_{k}}\widehat{U}_{k}, \qquad C^{\mathrm{DLRA}}_{k}=U_{k}^{\top}C_{Y_{k}}U_{k}, \qquad C^{\mathrm{DLRA},s}_{k}=(U_{k}^s)^{\top}C_{Y_{k}^s}U_{k}^s.
\end{equation}

\begin{Proposition}[DLR JMCO KB Smoother]\label{prop: DLR JMCO KB Smoother}
The proposed equations for DLR JMCO Smoother in a forward-backward formalism in case of linear drift are the following:
for the forward step at time $t_{n+1}$, the triplet $(m_{n+1}^{\mathrm{DLRA},s}, U_{n+1}^{s}, C_{Y_{n+1}}^{s})$ satisfies $m_{n+1}^{s}:=m_{n+1}$, $U_{n+1}^{s}:=U_{n+1}$, $C_{Y_{n+1}}^{s}:=C_{Y_{n+1}}$, i.e.\ the filtering relations \eqref{eq: DLR JMCO KBF analysis},
while for the backward step, for all $k=n, n-1, ..., 0$, one has the following relations
\begin{equation}\label{eq: DLR KBF Smoother}
	\begin{aligned}
	m_k^{\mathrm{DLRA},s}
	=&
	m_k^{\mathrm{DLRA}}
	+
	C_k^{\mathrm{DLRA},s}
	(I_{d \times d}+A(t_k)\Delta t)^{\top}
	U_{k+1}^{\top}
	C_{\widehat{Y}_{k+1}}^{-1}U_{k+1}
	\left(
	m_{k+1}^{\mathrm{DLRA},s}
	-
	\widehat{m}_{k+1}
	\right), \quad U_k^s = U_k,
	\\
		C_{Y_k^{s}}
	=&\,
	C_{Y_k}
	+
	C_{Y_k}
	U_k
	(I_{d \times d}+A(t_k)\Delta t)^{\top}
	U_{k+1}^{\top}
	C_{\widehat{Y}_{k+1}}^{-1}
	\\
	&\cdot
	(C_{Y_{k+1}^{s}}-C_{\widehat{Y}_{k+1}})
	C_{\widehat{Y}_{k+1}}^{-1}
	U_{k+1}
	(I_{d \times d}+A(t_k)\Delta t)
	U_k^{\top}
	C_{Y_k}.
	\end{aligned}
\end{equation}
\begin{proof}
As we are dealing with a linear drift, the evolution of system \eqref{DA SDEs EM} conditioned on the initial point $X_n^{\mathrm{DLRA}}$ and the observation $Z_{n+1}$ is Gaussian. Therefore, to retrieve the sought relations we exploit Proposition \ref{prop: Kalman smoother}, using the same notation employed therein.

As defined in Proposition \ref{prop: Kalman smoother}, for all $k \leq n$ one has
\begin{equation*}
	\begin{aligned}
		G_k
		=
		C_k^{\mathrm{DLRA},s}
		(I_{d \times d}+A(t_k)\Delta t)^{\top}
		\widehat{C}_{k+1}^{\dagger},
	\end{aligned}
\end{equation*}
and the smoothing updates on the DLRA mean and covariance are
\begin{equation}\label{eq: dlr smooth intermediate}
	\begin{aligned}
		m_k^{\mathrm{DLRA},s}
		&=
		m_k^{\mathrm{DLRA}}
		+
		G_k
		\left(
		m_{k+1}^{\mathrm{DLRA},s}
		-
		\widehat{m}_{k+1}
		\right),
		\\
		C_k^{\mathrm{DLRA},s}
		&=
		C_k^{\mathrm{DLRA}}
		+
		G_k
		\left(
		C_{k+1}^{\mathrm{DLRA},s}
		-
		\widehat{C}_{k+1}
		\right)
		G_k^{\top}
		\\
		&=
		C_k^{\mathrm{DLRA}}
		+
		C_k^{\mathrm{DLRA}}
		(I_{d \times d}+A(t_k)\Delta t)^{\top}
		\widehat{C}_{k+1}^{\dagger}
		\left(
		C_{k+1}^{\mathrm{DLRA},s}
		-
		\widehat{C}_{k+1}
		\right)
		\widehat{C}_{k+1}^{\dagger}
		(I_{d \times d}+A(t_k)\Delta t)
		C_k^{\mathrm{DLRA},s}.
	\end{aligned}
\end{equation}
Via using \eqref{eq: covariances} the relation of the covariance can be written as 
\begin{equation}
	(U_{k}^s)^{\top}C_{Y_{k}^s}U_{k}^s = 	U_{k}^{\top} C_{Y_{k}^s}U_{k}\left( I_{d \times d}
	+
	(I_{d \times d}+A(t_k)\Delta t)^{\top}
	\widehat{C}_{k+1}^{\dagger}
	\left(
	C_{k+1}^{\mathrm{DLRA},s}
	-
	\widehat{C}_{k+1}
	\right)
	\widehat{C}_{k+1}^{\dagger}
	(I_{d \times d}+A(t_k)\Delta t)
	C_k^{\mathrm{DLRA},s}\right).
\end{equation}
From the previous relation, we notice that the deterministic subspace $U$ does not change, and, hence, 
\begin{equation}\label{eq: U KBF smoother}
	\begin{aligned}
		U_{k}^{s}
		=
		U_{k}
		=
		\widehat{U}_{k}.
	\end{aligned}
\end{equation}
Using again \eqref{eq: covariances}, one can simplify \eqref{eq: dlr smooth intermediate} to
\begin{equation*}
	\begin{aligned}
		C_k^{\mathrm{DLRA},s}
		=
		C_k^{\mathrm{DLRA}}
		+
		C_k^{\mathrm{DLRA}}
		(I_{d \times d}+A(t_k)\Delta t)^{\top}
		\widehat{U}_{k+1}^{\top}
		C_{\widehat{Y}_{k+1}}^{-1}
		(C_{Y_{k+1}^{s}}-C_{\widehat{Y}_{k+1}})
		C_{\widehat{Y}_{k+1}}^{-1}
		\widehat{U}_{k+1}
		(I_{d \times d}+A(t_k)\Delta t)
		C_k^{\mathrm{DLRA}}.
	\end{aligned}
\end{equation*}

But, using \eqref{eq: U KBF smoother} again, finally one gets
\begin{equation*}
	\begin{aligned}
		C_{Y_k^{s}}
		=&\,
		C_{Y_k}
		+
		C_{Y_k}
		U_k
		(I_{d \times d}+A(t_k)\Delta t)^{\top}
		U_{k+1}^{\top}
		C_{\widehat{Y}_{k+1}}^{-1}
		\\
		&\cdot
		(C_{Y_{k+1}^{s}}-C_{\widehat{Y}_{k+1}})
		C_{\widehat{Y}_{k+1}}^{-1}
		U_{k+1}
		(I_{d \times d}+A(t_k)\Delta t)
		U_k^{\top}
		C_{Y_k}.
	\end{aligned}
\end{equation*}
\end{proof}
\end{Proposition}

\section{Numerical examples: Stochastic Advection-Diffusion-Reaction Model}\label{sec: num}
In this section, we illustrate the performance of the proposed smoothing strategy for a noisy one-dimensional advection-diffusion-reaction system, similarly studied in \cite{marzouk2026filter}, whose diffusion is given by a low-rank additive noise. This problem is described by the following relation:
\begin{equation}\label{ex: SADR DA}
	\begin{cases}
		\begin{aligned}
			\partial_t u(x,t,\omega)&= L u(x,t,\omega) + \sum_{i=1}^{m} \phi_i(x) \dot{W}_i(\omega), \quad (x,t,\omega) \in [0,L] \times [0,T] \times \Omega, \\
			u_{0}(x,\omega)&=  \sum_{i = 1}^m \cos\left( \frac{\pi i x}{ L} \right) \mathcal{N}_{i}(0,1), \quad (x,\omega) \in [0,L] \times \Omega,\\
			\partial_x u(0,t, \omega)&= \partial_x u(1,t,\omega) = 0, \quad (t,\omega) \in [0,T] \times \Omega,
		\end{aligned}
	\end{cases}
\end{equation}
where in \eqref{ex: SADR DA} the linear operator $L$ is defined as $L u(x,t,\omega) = a \partial_x^2 u(x,t,\omega) - v \partial_x u(x,t,\omega) + r u(x,t,\omega),$and $\{\mathcal{N}_{i}(0,1)\}_{i=1,\dots,m}$ denotes a collection of i.i.d.\ standard normal random variables, independent of the one-dimensional Brownian motions $\{W_i\}_{i=1,\dots,m}$. We set $a=0.01$, $v=0.05$, and $r=0.02$. The spatial domain is chosen as $[0,L]$, with $L=5$, and we impose Neumann boundary conditions. The temporal domain is given by $[0,T]$, with $T=20$.

For the diffusion term, we choose

$$
\phi_i(x) = \cos\left(\frac{i\pi f x}{L}\right),
\qquad i=1,\dots,m,
$$

with $f=5$. We discretize \eqref{ex: SADR DA} in space using second-order centered finite differences, while the advection term is approximated by a first-order upwind scheme. We employ a uniform spatial grid with mesh size $\Delta x=0.1$. This results in a physical dimension $d=50$, and we denote by $U^{\Delta x}_t$ the flattened vector containing the spatially discretized components of $u$ at time $t$.

For the data assimilation procedure, we will consider as reference algorithm the \emph{continuous-time ensemble Kalman smoother} in a RTS format \cite{raanes2016ensemble} (ES SDE), which will be computed with a larger number of particles than the DLRA ones, namely $M=10^5$ sample paths. 

The reference true solution of \eqref{ex: SADR DA} is discretized in time using the forward Euler--Maruyama method with uniform time step $\Delta t=1\cdot 10^{-2}$. We take $m=12$ and employ a DLR approximation of rank $k=12$.

We consider the following observation process:
\begin{equation}\label{eq: SADR obs process 1}
	\mathrm{d} Z_{t} = H U^{\Delta x}_t \mathrm{d}t + R^{\frac{1}{2}} \mathrm{d}B_t.
\end{equation}
We observe $10$ equidistant spatial coordinates, so that the dimension $h$ of the observation process $Z_t$ is $h=10$. The covariance of the observation noise is chosen as $R=0.01 \cdot I_{h\times h}.$
Finally, when computing the time-averaged errors, we discard an initial warm-up interval and set the warm-up time to $T_0=2$.

We try to compare the performance of the DLR filters concerning errors. As we are considering a linear SDE problem with a Gaussian initial condition, we know that the measure induced by the problem is still Gaussian and that an ensemble Kalman smoother is a good approximation of \eqref{ex: SADR DA} for large $M$. As the system is low-dimensional, we are expecting a good approximation of the DLRA-JMCO smoother and by construction we expect that this strategy outperforms the related DLRA-JMCO filter, as the smoother conditioned all the past mean and covariances on all the observations.

Indeed, in Figures \ref{fig: SADR CDCO ES SDE rank} and \ref{fig: SADR CDCO ES SDE samples}, where we plot the (relative) errors of the mean and the covariance with respect to the reference ES SDE at final time $T=20$ varying the rank and the number of samples, we see that the errors originated by the DLRA-JMCO Smoother are indeed lower than the respective filter, as expected.

\begin{figure}[!h]
	\centering
	\includegraphics[scale=0.18]{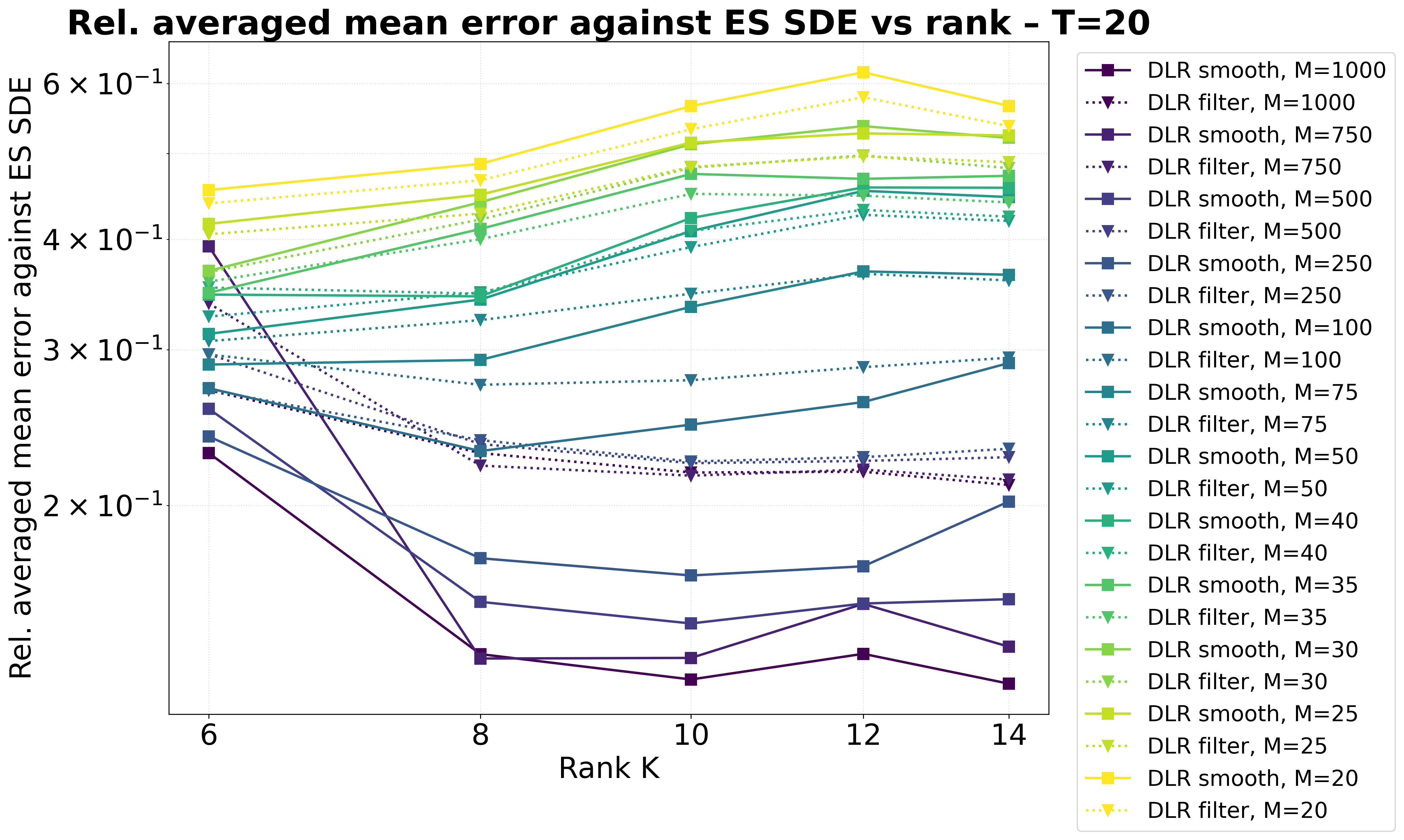}
	\includegraphics[scale=0.18]{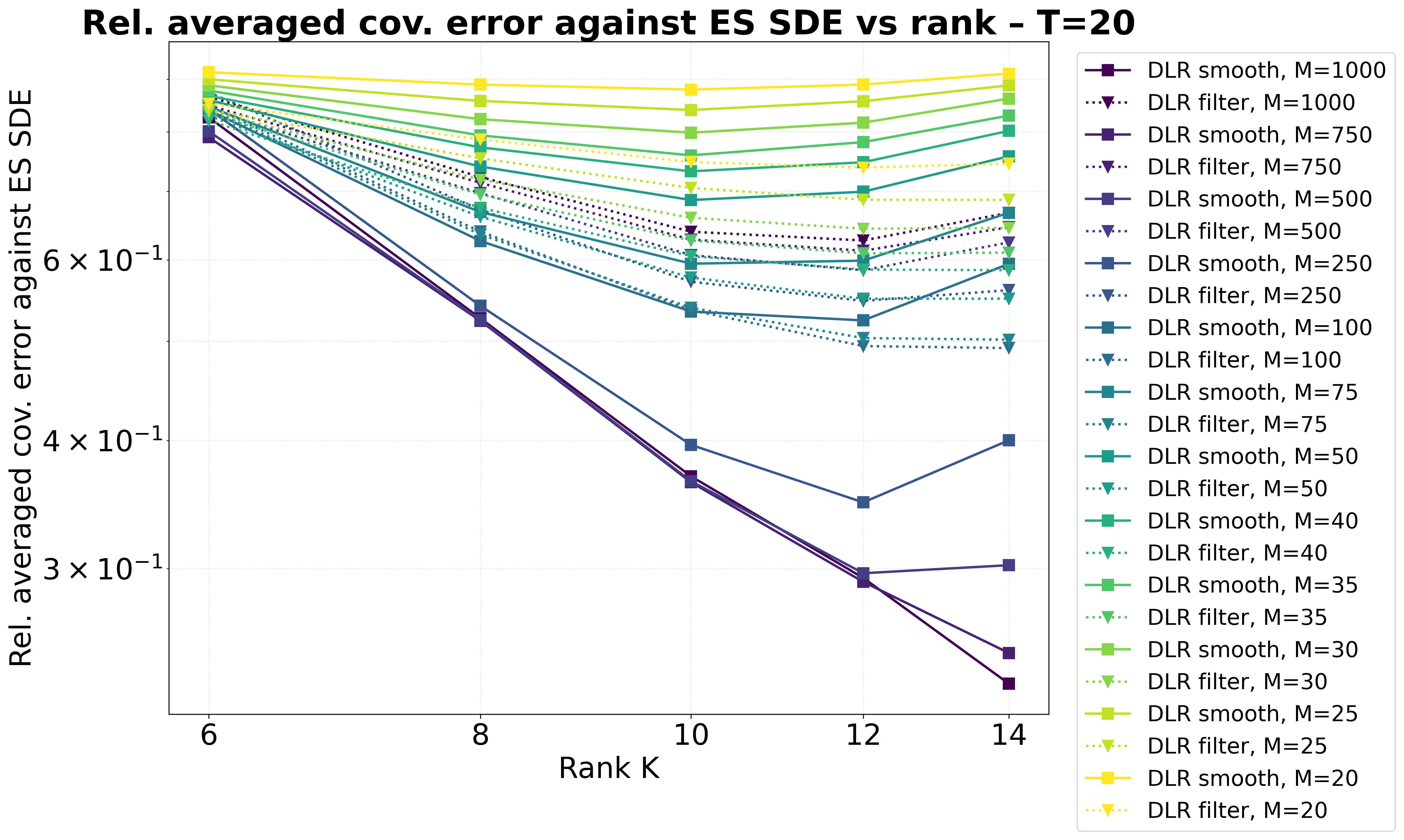} 
	\caption{Rel.\  averaged-over-time \emph{errors against ES SDE}, for continuous-time observations, with respect to the mean (left) and the covariance (right) for various rank $k$,  $d=50$, for problem \eqref{ex: SADR DA} with observation process \eqref{eq: SADR obs process 1}.}
	\label{fig: SADR CDCO ES SDE rank}
\end{figure}

\begin{figure}[!h]
	\centering
	\includegraphics[scale=0.18]{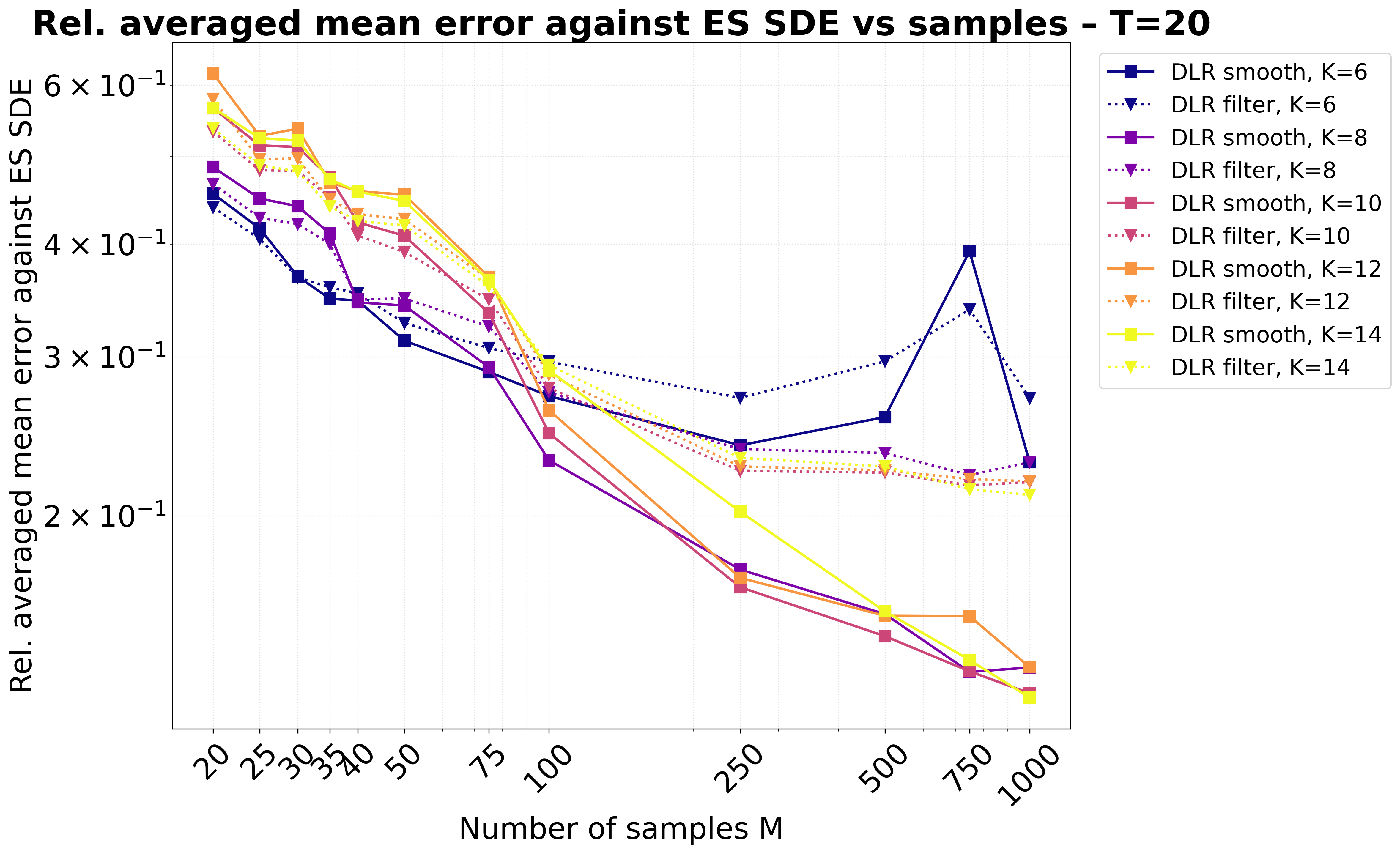}
	\includegraphics[scale=0.18]{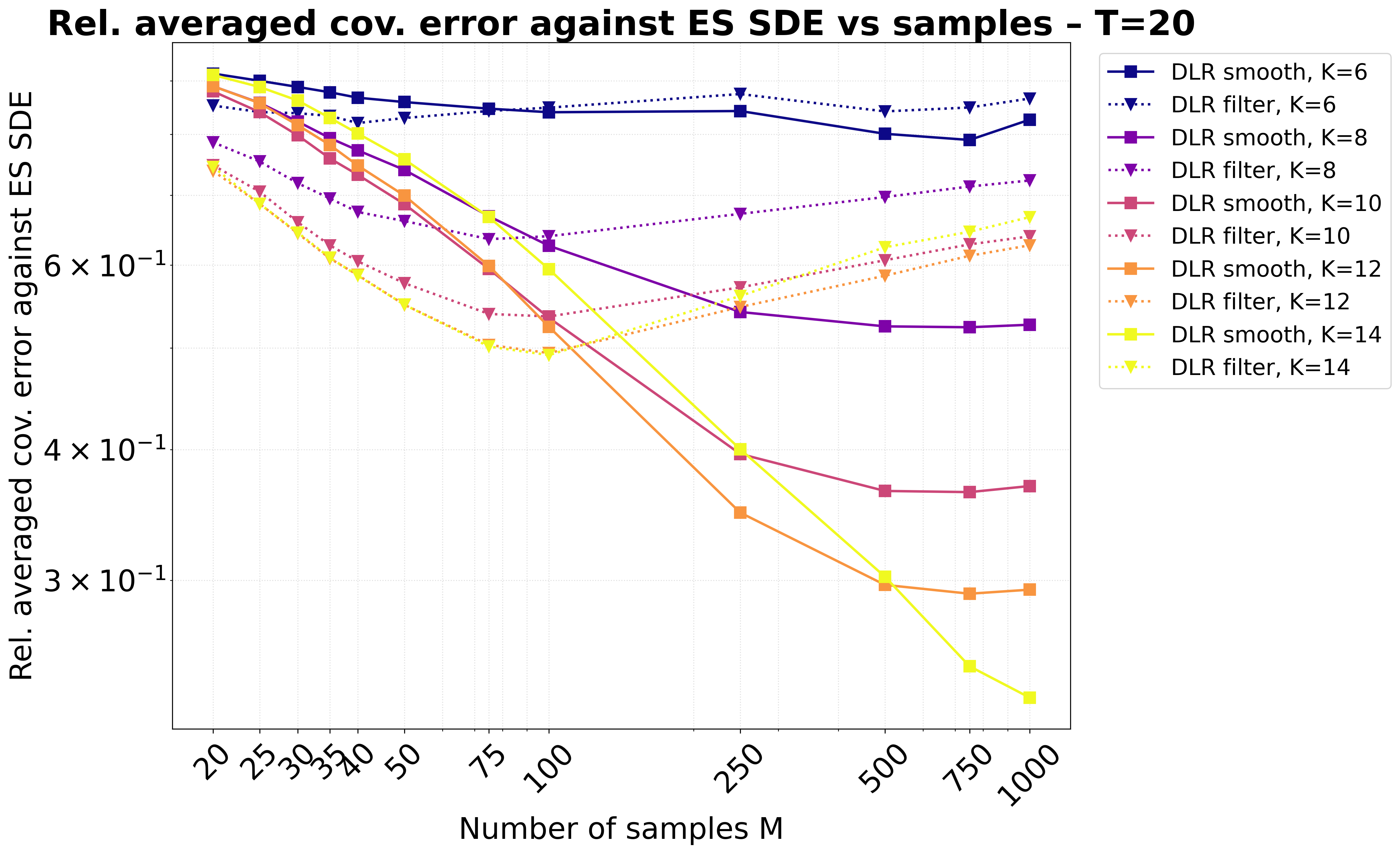} 
	\caption{Rel.\  averaged-over-time \emph{errors against ES SDE}, for continuous-time observations, with respect to the mean (left) and the covariance (right) for various number of samples $M$, for problem \eqref{ex: SADR DA} with observation process \eqref{eq: SADR obs process 1}.}
	\label{fig: SADR CDCO ES SDE samples}
\end{figure}

\section{Conclusion}
In this article, we extended the framework for the Kalman-type DLRA-JMCO filters proposed in \cite{marzouk2026filter} to smoothing. Exploiting the RTS recursion and the structure of the DLRA components recovers a smoothing procedure which is based only on low-rank matrix/vector multiplications, enabling fast simulation and light storage, with promising good accuracy when dealing with problems whose intrinsic dimension is actually low. Furthermore, we proposed a Kalman-Bucy-type smoother which is applicable in case of affine drift.

A possible extension of this work is to derive particle-type smoothers from the DLRA particle filter proposed in \cite{marzouk2026filter}, which is a work in progress.

\printbibliography

\end{document}